\documentclass{dcds-bEA}
\usepackage{amsmath}
\usepackage{amssymb}
\usepackage{paralist}
\usepackage[misc]{ifsym}
\usepackage{epsfig}
\usepackage{epstopdf}
\usepackage{graphicx}
\usepackage{booktabs}
\usepackage[colorlinks=true]{hyperref}
\hypersetup{urlcolor=blue, citecolor=red}
\allowdisplaybreaks

\usepackage{amsfonts}
\usepackage{mathtools}
\usepackage{bm}
\usepackage{subcaption}
\usepackage{placeins}
\usepackage{algorithm}
\usepackage{algpseudocode}
\usepackage{float}
\usepackage{tabularx}
\usepackage{makecell}
\usepackage{array}

\newcolumntype{C}{>{\centering\arraybackslash}X}
\newcolumntype{L}{>{\raggedright\arraybackslash}X}
\newcolumntype{R}{>{\raggedleft\arraybackslash}X}

\def\currentvolume{X}
\def\currentissue{X}
\def\currentyear{202X}
\def\currentmonth{XX}
\def\ppages{1--XX}
\def\DOI{10.3934/dcdsb.2026072}

\newtheorem{proposition}{Proposition}

\title[Optimal chemo--immunotherapy scheduling]
{Optimal chemo--immunotherapy scheduling: a hybrid QPSO--SQP approach with Michaelis--Menten pharmacodynamics}

\author[B. Kidane, M. Motayed and S. Wang]{}

\subjclass{Primary: 49J15, 49K15, 92C50; Secondary: 90C26, 49M25, 92C45.}
\keywords{Chemo--immunotherapy, Michaelis--Menten kinetics, optimal control, Pontryagin maximum principle, interior arcs, QPSO, SQP, direct collocation.}

\thanks{$^*$Corresponding author: Shuo Wang}

\begin{document}

\maketitle

\centerline{\scshape
		Bereket Sitotaw Kidane$^{{\href{mailto:bereket.kidane@uta.edu}{\textrm{\Letter}}}}$, Md Samiul Haque Motayed$^{{\href{mailto:mdsamiulhaque.motayed@uta.edu}{\textrm{\Letter}}}}$}
		\centerline{\scshape and Shuo Wang$^{{\href{mailto:shuolinda.wang@uta.edu}{\textrm{\Letter}}}*}$}

\medskip

{\footnotesize
 \centerline{Department of Mechanical and Aerospace Engineering}
 \centerline{The University of Texas at Arlington, Arlington, TX 76019, USA}
}

\bigskip

\centerline{(Communicated by Urszula Ledzewicz)}

\begin{abstract}
Optimal scheduling of combined chemo--immunotherapy is often formulated as a control-affine optimal control problem, which generically yields boundary-selected (bang--bang-type) protocols unless singular arcs occur. This structure, while convenient, neglects saturating pharmacodynamics at high dose rates. We incorporate Michaelis--Menten saturation directly into the therapy channels, making the dynamics non-control-affine and the Hamiltonian nonlinear in each control. With strictly positive exposure penalties, the Hamiltonian admits an explicit three-regime pointwise minimization law: each input is chosen at the lower bound, the upper bound, or as a unique interior minimizer. On interior intervals the strict Legendre condition holds, so continuously modulated dosing arises as a regular interior extremal rather than a singular-arc or smoothing artifact. The resulting transcription is nonconvex; we therefore use a hybrid \emph{Quantum Particle Swarm Optimization} (QPSO)--\emph{Sequential Quadratic Programming} (SQP) pipeline, where QPSO provides a constraint-aware warm start and SQP enforces feasibility and \emph{Karush--Kuhn--Tucker} (KKT) optimality on a collocation grid. Costate reconstruction corroborates the \emph{Pontryagin Minimum Principle} (PMP) structure and the predicted boundary/interior regime transitions.
\end{abstract}

%%%%%%%%%%%%%%%%%%%%%%%%%%%%%%%%%%%%%%%%%%%%%%%%%%%%%%
%                   4. BODY
%%%%%%%%%%%%%%%%%%%%%%%%%%%%%%%%%%%%%%%%%%%%%%%%%%%%%%

\section{Introduction}
\label{sec:intro}

Cancer remains one of the most complex challenges in modern medicine, driving a persistent search for therapeutic strategies that maximize tumor regression while strictly limiting systemic toxicity~\cite{WHOCancerFactSheet,Hanahan2000}. While localized malignancies are often manageable via surgery and radiation, metastatic disease fundamentally relies on systemic intervention~\cite{NCISystemicTherapy}. For decades, the dominant paradigm in clinical chemotherapy has been the administration of the maximum tolerated dose (MTD)~\cite{Romiti2013}. This approach is historically rooted in the linear ``log-kill'' hypothesis proposed by Skipper \textit{et al.}, which posits that the fractional rate of tumor cell kill is directly proportional to instantaneous drug exposure~\cite{Skipper1986}. Although mathematically convenient, aggressive MTD regimens are frequently compromised by a narrow therapeutic index, severe host toxicity, and the rapid emergence of multi-drug resistance~\cite{Romiti2013}. Motivated by these limitations, ``metronomic'' chemotherapy---continuous, lower-dose administration---has received increasing attention as a strategy that can improve long-horizon control by preserving immune competence and, in anti-angiogenic settings, exploiting mechanisms not captured by standard linear dose--response models~\cite{Romiti2013,Ledzewicz2007}.

These limitations have catalyzed the paradigm shift toward immunotherapy, which harnesses the patient's immune system to eradicate malignant cells via mechanisms such as checkpoint blockade and chimeric antigen receptor (CAR) T-cell therapies~\cite{Ribas2018,June2018}. A critical frontier in oncology lies in the synergistic combination of cytotoxic and immune-mediated interventions. It is now well established that chemotherapy-induced apoptosis can be immunogenic: tumor lysis releases tumor-associated antigens that promote immune recognition and priming~\cite{Apetoh2007,Serre2016}. Yet, synergy is governed by a delicate dynamic trade-off, since many cytotoxic agents are inherently immunosuppressive; overly aggressive cytoreduction can impair the immune competence required for durable remission~\cite{Ledzewicz2013,DePillis2006}. This interplay motivates principled scheduling strategies that explicitly balance short-term cytotoxic efficacy against longer-term immune-mediated control.

From a mathematical perspective, chemotherapy scheduling has been extensively studied using deterministic optimal control frameworks. Early models focused on tumor dynamics alone, treating drug administration as a single control subject to toxicity constraints~\cite{Martin1992,Swan1984}. Subsequent work incorporated host dynamics and resistance effects, yielding protocols that often favor MTD-like or bang--bang structures under control-affine (linear-in-dose) pharmacodynamic assumptions~\cite{Ledzewicz2004,Schattler2015}. In parallel, immunotherapy modeling has introduced coupled tumor--immune systems capturing effector activation, exhaustion, and immune-mediated cytotoxicity~\cite{DePillis2006,Kuznetsov1994,Kuznetsov2020}. Several studies have extended these models to combination chemo--immunotherapy, demonstrating that properly timed cytotoxic treatment can enhance immune recognition through antigen release~\cite{Serre2016,Ledzewicz2013}. However, many formulations retain linear dose--response relationships for both chemotherapy and immune stimulation. As a consequence, Pontryagin's Minimum Principle (PMP) typically yields boundary-selected protocols (bang--bang) unless additional singular-arc conditions occur or explicit smoothing/regularization terms are introduced, limiting the ability of the resulting schedules to represent graded, adaptive dosing under physiological saturation.

Despite these advances, a persistent structural limitation remains in many optimal control formulations of chemo--immunotherapy: pharmacodynamic effects are commonly modeled in a control-affine form. In such settings, optimal solutions are generically boundary-driven, and interior modulation is not structurally enforced. In this work, we relax the control-affine assumption by embedding saturating Michaelis--Menten pharmacodynamics directly into the coupled tumor--immune dynamics. When combined with strictly positive exposure penalties, this formulation changes the pointwise Hamiltonian minimization structure and admits regular interior minimizers on intervals where the optimal inputs lie strictly within their bounds. Consequently, continuously modulated titration segments arise as direct implications of first-order optimality, rather than as artifacts of ad hoc smoothing or as singular-control phenomena.

The rational Michaelis--Menten control map renders the dynamics nonlinear in the inputs and, after direct transcription, produces a large-scale NLP with a strongly nonconvex, multi-modal landscape. In this regime, purely gradient-based solvers can be highly initialization-sensitive and may converge to poor local minima, such as trivial mono-therapy schedules or solutions with premature boundary switching~\cite{Betts2009,vonStryk1992}. To improve reliability, we adopt a hybrid global--local pipeline: a QPSO warm-start stage performs stochastic exploration and locates a favorable basin of attraction~\cite{Sun2004a,Wang2018}, and a subsequent SQP refinement stage solves the collocation transcription to tight feasibility and KKT tolerances~\cite{Boggs1995,Nocedal2006}. This hybrid QPSO--SQP framework follows our earlier implementation for optimal therapy scheduling~\cite{KidaneCDC2025} and is essential here for consistently recovering the mixed boundary--interior arc structures reported in Section~\ref{sec:results}.

It is important to delineate the scope of the proposed model relative to the broader challenges of oncology. While acquired resistance is a critical driver of long-term treatment failure, the present study targets a different and more foundational issue: the \emph{optimal-control structure} implied by common pharmacodynamic assumptions. Accordingly, we isolate the effect of saturating (Michaelis--Menten) pharmacodynamics within a homogeneous tumor population and use this setting as a rigorous baseline for \emph{threshold-reaching} therapy design---i.e., reaching a prescribed terminal tumor target with minimal exposure and treatment duration. This baseline can later be extended to incorporate resistant subpopulations and evolutionary selection.

In summary, the contributions of this paper are threefold: (i) an optimal control formulation for chemo--immunotherapy with saturating Michaelis--Menten pharmacodynamics, together with a PMP-based structural characterization showing that, with strictly positive exposure penalties, the Hamiltonian becomes nonlinear and admits regular interior minimizers, so clinically interpretable continuous titration arises as a \emph{regular interior-arc} solution rather than a singular-control phenomenon or numerical smoothing artifact; (ii) a controlled set of numerical case studies (balanced, asymmetric ablation, and the minimum-time limit) that isolate the causal role of exposure regularization in selecting boundary versus interior arc structure while enforcing a hard terminal tumor threshold; and (iii) a hybrid QPSO--SQP numerical pipeline that mitigates local-minimum sensitivity in the resulting nonconvex NLP and achieves high-accuracy feasibility and KKT-optimality for the terminal tumor constraint.

The remainder of this paper is organized as follows. Section~\ref{sec:model_ocp} presents the mathematical model with saturation kinetics and formulates the optimal control problem. Section~\ref{sec:necessary_optimality} derives the necessary conditions for optimality and characterizes the Hamiltonian structure with respect to the controls. The hybrid numerical method is described in Section~\ref{sec:numerical_method}. Section~\ref{sec:results} reports numerical results and controlled case studies, and Section~\ref{sec:conclusion} concludes with limitations and directions for future work.

\section{Mathematical model and optimal control problem formulation}
\label{sec:model_ocp}

\subsection{Tumor--immune dynamics with saturation kinetics}
\label{subsec:model_saturation}

To investigate optimal chemo--immunotherapy schedules, we adopt a three-dimensional tumor--immune interaction model with state variables: tumor burden $x(t)$, immunocompetent effector density $y(t)$, and tumor-associated antigen $z(t)$. The coupling structure follows established formulations in the literature, but we introduce a key modification in the therapy pharmacodynamics.

Classical optimal-control formulations for chemotherapy typically rely on the log-kill hypothesis, in which cytotoxic effect is linear in drug exposure, yielding a control-affine system. In contrast, receptor occupancy, downstream signaling, and other capacity-limited pharmacologic processes impose saturable limits on therapeutic efficacy. To reflect this pharmacological constraint, we replace linear exposure terms with normalized Michaelis--Menten/E$_{\max}$-type saturation laws~\cite{Michaelis1913,Felmlee2012}. In mathematical oncology, saturation functions of Michaelis--Menten or sigmoid type are widely regarded as more realistic than purely linear dose--response laws for anticancer therapy~\cite{Swierniak2009}. Moreover, in constructing a tractable yet interpretable treatment model, we seek to retain the key biological actors and mechanisms while avoiding unnecessary complexity that cannot be supported by available data. Accordingly, we use saturating therapy maps within a low-dimensional coupled chemo--immunotherapy tumor--immune system.

\paragraph{\emph{Units, bounds, and normalization.}}
Time $t$ is measured in \emph{days}. The states $x$ and $z$ are reported in scaled units (with $1$ representing $10^6$ cells), while $y$ is dimensionless (relative to a homeostatic baseline). The control inputs satisfy
\[
u(t)\in[0,u_{\max}], \qquad v(t)\in[0,v_{\max}],
\]
and represent \emph{normalized} dose rates (fractions of the maximum admissible infusion rates). The corresponding pharmacodynamic action is modeled through saturating response functions
\[
\frac{u}{1+u},\qquad \frac{v}{1+v},
\]
which are bounded and strictly increasing on the admissible control set; the associated physical dose-rate scaling is absorbed into the efficacy coefficients.\footnote{Equivalently, one may introduce explicit half-saturation constants $K_u, K_v>0$ and use $u/(K_u+u)$ and $v/(K_v+v)$. In this work, we adopt the normalized form with $K_u=K_v=1$ for notational simplicity.}

\paragraph{Relation to the control-affine baseline.}
This work is inspired by the bang--bang optimal control analysis of Ledzewicz, Maurer and Sch\"attler~\cite{Ledzewicz2024}, which treats a control-affine chemo--immunotherapy model and reports extremals with explicit switching times and terminal time $T$. Here, we modify \emph{only} the therapy channels by replacing linear exposure terms with saturating Michaelis--Menten kinetics; therefore, the schedules and terminal times reported in this paper are not reproductions of~\cite{Ledzewicz2024}, but optimal solutions for the modified pharmacodynamics under the same scaled-unit convention.

\paragraph{\emph{Dynamics.}}
The resulting system is
\begin{align}
    \dot{x} &= \underbrace{\xi x \left( 1 - \frac{x}{x_\infty} \right)}_{\text{Logistic growth}}
    \; - \; \underbrace{\theta x y}_{\text{Immune predation}}
    \; - \; \underbrace{\frac{\alpha x u}{1+u}}_{\text{Saturable cytotoxicity}},
    \label{eq:dx_saturation}\\[0.5em]
    \dot{y} &= \underbrace{a(1-bx)yz}_{\text{Stimulation/suppression}}
    \; + \; \underbrace{\gamma - \delta y}_{\text{Natural homeostasis}}
    \; - \; \underbrace{\frac{\kappa y u}{1+u}}_{\text{Drug toxicity}}
    \; + \; \underbrace{\frac{\nu y v}{1+v}}_{\text{Immunotherapy}},
    \label{eq:dy_saturation}\\[0.5em]
    \dot{z} &= \underbrace{\sigma x}_{\text{Shedding}}
    \; + \; \underbrace{\frac{\psi x u}{1+u}}_{\text{Therapy-induced release}}
    \; - \; \underbrace{\mu z}_{\text{Clearance}}.
    \label{eq:dz_saturation}
\end{align}

Equation~\eqref{eq:dx_saturation} governs the tumor burden $x(t)$. Logistic growth is parameterized by intrinsic growth rate $\xi$ and carrying capacity $x_\infty$, while immune-mediated killing is modeled via the mass-action term $-\theta x y$. The chemotherapy effect is represented by the saturable term $-\alpha x \frac{u}{1+u}$, where $\alpha$ denotes the asymptotic maximum cytotoxic rate under normalized dosing.

Equation~\eqref{eq:dy_saturation} describes the effector population $y(t)$. Antigen $z(t)$ stimulates immune proliferation through $a(1-bx)yz$, with $(1-bx)$ capturing tumor-induced immunosuppression. The terms $\gamma-\delta y$ model natural immune influx and decay. Chemotherapy imposes off-target toxicity via $-\kappa y \frac{u}{1+u}$. Here, immunotherapy is interpreted generically as a stimulatory input acting on the extant effector population and is modeled as proliferation and activation of existing effectors through $+\nu y \frac{v}{1+v}$; the multiplicative dependence on $y$ reflects that efficacy requires a nonzero baseline immune population. The saturating factor $\frac{v}{1+v}$ is interpreted as a bounded stimulatory signal: increasing immunotherapy dose enhances effector activation and proliferation, but only up to a finite limit set by receptor occupancy and downstream signaling capacity. This interpretation is consistent with immunotherapy pharmacology in which exposure--response relationships can plateau as target engagement approaches saturation~\cite{Agrawal2016}, and with tumor--immune modeling practice in which immune recruitment and activation terms are commonly represented using saturating response functions~\cite{Mahlbacher2019}.

Equation~\eqref{eq:dz_saturation} tracks tumor-associated antigen $z(t)$, which couples cytotoxic therapy to immune recruitment. Antigen increases via intrinsic shedding $\sigma x$ and therapy-induced release $\psi x \frac{u}{1+u}$, and decays at rate $\mu z$. This source term represents the immunogenic component of chemotherapy-induced cell death. The optimal-control consequences of the saturating therapy map are derived in Section~\ref{sec:necessary_optimality}.

% ---------------------------------------------------------------
%  TABLE 1 — Variables and parameters
%  FIX: replaced {l l c l c} raw tabular with tabularx + \small
%       to guarantee the table fits inside the 5.0 in text width.
% ---------------------------------------------------------------
\begin{table}[!htbp]
    \centering
    \small
    \caption{Variables and parameters. Numerical values are based on a mouse
      model~\cite{Kuznetsov1994} and are reported in scaled units
      (i.e., $1$ represents $10^6$ cells where indicated).
      Following the normalization convention of~\cite{Ledzewicz2024},
      rate parameters are reported as non-dimensional quantities in the
      scaled time unit (one time unit $\approx$ one day).
      Drug efficacy parameters are illustrative under normalized dosing.}
    \label{tab:parameters}
    \begin{tabularx}{\linewidth}{@{}lLccc@{}}
        \toprule
        Symbol & Interpretation & Value & Dimension & Ref. \\
        \midrule
        $x$ & Tumor volume & -- & $10^6$ cells & \cite{Stepanova1980} \\
        $x_\infty$ & Tumor carrying capacity & 780 & $10^6$ cells & -- \\
        $y$ & Immunocompetent effector density & -- & non-dim. & \cite{Stepanova1980} \\
        $z$ & Tumor antigen & -- & $10^6$ cells & \cite{Serre2016} \\
        \midrule
        $x_0$ & Initial tumor volume & 650 & $10^6$ cells & -- \\
        $y_0$ & Initial effector density & 0.1 & non-dim. & -- \\
        $z_0$ & Initial antigen & 400 & $10^6$ cells & -- \\
        \midrule
        $u$ & Chemo dose rate (normalized) & $[0,1]$ & non-dim. & -- \\
        $v$ & Immuno dose rate (normalized) & $[0,1]$ & non-dim. & -- \\
        \midrule
        $\xi$ & Tumor growth rate & 0.5618 & non-dim. & \cite{Kuznetsov1994} \\
        $\theta$ & Tumor--immune interaction rate & 1.0 & non-dim. & \cite{Kuznetsov1994} \\
        $a$ & Antigen-stimulated prolif.\ rate & 0.00726 & non-dim. & \cite{Ledzewicz2024} \\
        $b$ & Inverse threshold for suppression & 0.00264 & non-dim. & \cite{Ledzewicz2024} \\
        $\gamma$ & Immune influx rate & 0.1181 & non-dim. & \cite{Ledzewicz2024} \\
        $\delta$ & T-cell death rate & 0.3745 & non-dim. & \cite{Ledzewicz2024} \\
        $\sigma$ & Intrinsic immunogenicity & 0.1 & non-dim. & \cite{Ledzewicz2024} \\
        $\mu$ & Antigen clearance rate & 0.45 & non-dim. & \cite{Serre2016} \\
        \midrule
        $\alpha$ & Chemo killing-rate scale on $x$ & 1 & non-dim. & -- \\
        $\kappa$ & Chemo toxicity-rate scale on $y$ & 2 & non-dim. & -- \\
        $\psi$ & Therapy-induced antigen release & 2 & non-dim. & -- \\
        $\nu$ & Immuno efficacy-rate scale on $y$ & 1 & non-dim. & -- \\
        \bottomrule
    \end{tabularx}

    \vspace{0.2cm}
    \footnotesize{$^*$Controls are normalized ($u,v\in[0,1]$). Consistent
      with~\cite{Ledzewicz2024}, all rate-like coefficients are
      non-dimensional under the scaled time unit (one unit $\approx$ one day);
      physical unit conversion is recovered via $\tau=t/t_0$, $t_0=1$ day.}
\end{table}

\subsection{Optimal control problem formulation}

The primary objective in this study is \emph{threshold-reaching} therapy design: to drive the tumor burden to a prescribed terminal target $x_f$ while minimizing cumulative drug exposure and treatment duration (and, when desired, additional state penalties reflecting immune preservation or antigen burden). Accordingly, we formulate the optimal control problem \textbf{(OC)} as follows:

\begin{equation}
\tag{OC}
\label{eq:OC_formulation}
\begin{aligned}
& \underset{u(\cdot), v(\cdot), T}{\text{minimize}}
& &J = \int_0^{T} \left( \ell_1 x(t) + \ell_2 y(t) + \ell_3 z(t) + A u(t) + B v(t) + C \right) dt \\
& \text{subject to}
& & \dot{x}(t) = \xi x\left(1 - \frac{x}{x_\infty}\right) - \theta xy - \frac{\alpha x u}{1 + u}, \\
& & & \dot{y}(t) = a(1 - bx)yz + \gamma - \delta y - \frac{\kappa y u}{1 + u} + \frac{\nu y v}{1 + v}, \\
& & & \dot{z}(t) = \sigma x + \frac{\psi x u}{1 + u} - \mu z, \\
& & & \bm{w}(0) = \bm{w}_0, \quad \text{(Initial conditions)} \\
& & & x(T) = x_f, \quad \text{(Terminal constraint)} \\
& & & 0 \le u(t) \le u_{\max}, \quad 0 \le v(t) \le v_{\max}. \quad \text{(Control bounds)}
\end{aligned}
\end{equation}

In the objective functional $J$, the weights $A$ and $B$ penalize cumulative drug exposure, representing toxicity and cost. The term $C$ penalizes treatment duration, promoting time optimality. The coefficients $\ell_1, \ell_2, \ell_3$ weight the state variables, determining the relative priority of tumor reduction versus preservation of immune effector density and antigen clearance within the composite performance index.

\noindent \textit{Existence of solutions.}
For any measurable controls $(u(\cdot),v(\cdot))\in\Omega$, the right-hand side $F(\bm w,u,v)$ is continuous in $(\bm w,u,v)$ and locally Lipschitz in $\bm w$; hence the state system admits a unique absolutely continuous trajectory on $[0,T]$ for each fixed $T>0$. Under the imposed bounds and forward invariance (nonnegativity) of the states, trajectories remain bounded on the horizon, i.e., confined to a compact subset of the state space.

Regarding existence of an \emph{optimal} control, we appeal to standard existence results for Bolza problems in the \emph{relaxed-control} setting (e.g., Filippov-type existence theorems)~\cite{Filippov1962}, which guarantee existence under compact control bounds and continuity of $(F,L)$ in $(\bm w,u,v)$, without requiring a control-affine structure. In particular, since $\Omega=[0,u_{\max}]\times[0,v_{\max}]$ is nonempty and compact and the running cost is continuous and bounded below, an optimal relaxed control exists; moreover, in our numerical study we compute ordinary (non-relaxed) optimal controls that satisfy the terminal constraint to the stated tolerance.

\section{Optimality conditions and control characterization}
\label{sec:necessary_optimality}

Having formulated the optimal control problem in Section~\ref{sec:model_ocp}, we now characterize the structure of optimal therapeutic schedules using Pontryagin's Minimum Principle (PMP)~\cite{Pontryagin1962,Bressan2007}. PMP provides first-order necessary conditions that couple the optimal state trajectory $\bm{w}^*(t)=[x^*(t),y^*(t),z^*(t)]^T$ to a costate vector $\bm{\lambda}^*(t)\in\mathbb{R}^3$ and a pointwise Hamiltonian minimization law for the controls. Specifically, an optimal pair $(u^*(\cdot), v^*(\cdot))\in\Omega$ must admit associated trajectories $(\bm{w}^*(\cdot), \bm{\lambda}^*(\cdot))$ such that (i) the state equations hold with $\bm{w}(0)=\bm{w}_0$; (ii) the adjoint equations satisfy $\dot{\bm{\lambda}}(t)=-\nabla_{\bm{w}}H(\bm{w}(t),\bm{\lambda}(t),u(t),v(t))$ almost everywhere; (iii) the pointwise minimization condition holds:
\[
(u^*(t),v^*(t))\in\arg\min_{(u,v)\in\Omega} H(\bm{w}^*(t),\bm{\lambda}^*(t),u,v)
\quad\text{for a.e. }t\in[0,T];
\]
and (iv) the appropriate transversality conditions are satisfied for the free-final-time and mixed terminal constraints. The step-by-step derivation of the adjoint system and the algebra leading to the closed-form control characterizations are provided in Appendix~\ref{app:adjoint}--\ref{app:arcs}.

To make these conditions explicit for the present problem, we introduce the Hamiltonian
\begin{equation}
    H(\bm{w}, \bm{\lambda}, u, v) = L(\bm{w}, u, v) + \bm{\lambda}^\top F(\bm{w}, u, v).
    \label{eq:Hamiltonian}
\end{equation}

\noindent \textit{Transversality conditions.}
For the fixed terminal constraint $x(T)=x_f$ and free terminal values $y(T),z(T)$, the transversality conditions are
\[
\lambda_2(T)=0,\qquad \lambda_3(T)=0,
\]
while $\lambda_1(T)$ is free and is determined implicitly by the free-final-time condition (equivalently, $H(T)=0$ for the autonomous Bolza problem).

\subsection{Characterization of optimal controls}
We now characterize the pointwise minimizers of the Hamiltonian with respect to the control variables. The optimal controls $(u^*,v^*)$ minimize the Hamiltonian pointwise over the admissible set
\[
\Omega=[0,u_{\max}]\times[0,v_{\max}].
\]

Building on control characterizations for bounded cancer therapy optimal control problems~\cite{Ledzewicz2004,Schattler2015}, we define the switching potentials
\[
\Phi_u(t) \triangleq -\lambda_1(t)\,\alpha x(t) - \lambda_2(t)\,\kappa y(t) + \lambda_3(t)\,\psi x(t),
\qquad
\Phi_v(t) \triangleq \lambda_2(t)\,\nu y(t).
\]

Differentiating the Hamiltonian with respect to the controls yields
\begin{equation}
    \frac{\partial H}{\partial u} = A + \frac{\Phi_u}{(1+u)^2},
    \qquad
    \frac{\partial H}{\partial v} = B + \frac{\Phi_v}{(1+v)^2}.
    \label{eq:stationarity_partials}
\end{equation}

\paragraph{\emph{Implications for optimal-control structure.}}
Equations~\eqref{eq:stationarity_partials} already reveal the key structural departure from the control-affine setting. Because the therapy enters through rational nonlinearities, the dynamics are no longer control-affine. This changes the PMP minimization structure in an essential way. In particular, on any interval where an optimal control lies strictly inside its bounds, the stationarity condition together with~\eqref{eq:stationarity_partials} implies that the corresponding switching potential must be negative whenever the associated exposure weight is positive. Consequently, interior segments satisfy the strict Legendre condition and therefore correspond to \emph{regular interior arcs} rather than singular-control phenomena. This observation becomes precise by examining the curvature of the Hamiltonian in each control channel and the resulting three-regime pointwise minimization law.

\noindent \textit{Conditional convexity and interior minimizers.}
A crucial distinction from linear-affine control models is that the curvature of $H$ in each control channel is \emph{sign-indefinite} and governed by the switching potentials. The second derivatives are
\begin{equation}
    \frac{\partial^2 H}{\partial u^2} = \frac{-2\Phi_u}{(1+u)^3},
    \qquad
    \frac{\partial^2 H}{\partial v^2} = \frac{-2\Phi_v}{(1+v)^3}.
    \label{eq:convexity_proof}
\end{equation}

Thus, for fixed $(\bm{w},\bm{\lambda})$, the Hamiltonian is strictly convex in $u$ when $\Phi_u(t)<0$ and strictly concave when $\Phi_u(t)>0$; similarly, it is strictly convex in $v$ when $\Phi_v(t)<0$ and strictly concave when $\Phi_v(t)>0$. In particular, if $A>0$ and an optimal control lies strictly inside its bounds, $0<u<u_{\max}$, then the stationarity condition $\partial H/\partial u=0$ implies
\[
\Phi_u(t)=-A(1+u(t))^2<0,
\]
and substituting into~\eqref{eq:convexity_proof} yields $\partial^2 H/\partial u^2>0$ on that interval, i.e., the strict Legendre condition holds in the $u$-channel. Therefore, any interior stationary solution is a unique local minimizer of the Hamiltonian with respect to that control.

The remaining cases correspond to loss of interior stationarity, in which the minimizer is selected at the admissible bounds according to threshold conditions. Specifically, when $\Phi_u(t)\ge -A$, the Hamiltonian is nondecreasing in $u$ on $[0,u_{\max}]$, and the minimizer is attained at the lower bound $u^*(t)=0$; when $\Phi_u(t)\le -A(1+u_{\max})^2$, the Hamiltonian is nonincreasing on $[0,u_{\max}]$, and the minimizer is attained at the upper bound $u^*(t)=u_{\max}$. The same conclusions hold for the $v$-channel with $(B,\Phi_v,v_{\max})$ in place of $(A,\Phi_u,u_{\max})$.

\paragraph{Structural contrast: control-affine vs.\ saturating pharmacodynamics.}
These interior and threshold conditions show that the novelty of the present result is structural rather than merely algebraic. In the classical control-affine setting with linear exposure cost,
\[
\dot{\bm w}=f(\bm w)+g_u(\bm w)u+g_v(\bm w)v,
\qquad
L(\bm w,u,v)=\ell(\bm w)+Au+Bv,
\]
the Hamiltonian is affine in each control channel. Consequently, unless a singular-arc condition holds, PMP selects boundary controls (bang--bang-type behavior) by a sign rule on the switching functions. In contrast, with saturating pharmacodynamics $u\mapsto u/(1+u)$ and $v\mapsto v/(1+v)$, the Hamiltonian becomes \emph{nonlinear} in each channel even though the exposure penalty remains linear. This nonlinearity makes linear exposure regularization sufficient to generate \emph{regular interior minimizers} on intervals where the switching potentials fall in an interior regime. Thus, continuously modulated dosing arises as a first-order PMP consequence of saturation--regularization interaction, without introducing quadratic smoothing terms or invoking singular-control phenomena.

The preceding observations can now be summarized in a precise pointwise minimization law.

\begin{proposition}[Pointwise minimizers and three-regime structure]
\label{prop:pointwise_minimizers}
Fix $t\in[0,T]$ and assume $A>0$, $B>0$ with bounds $u\in[0,u_{\max}]$, $v\in[0,v_{\max}]$.
Define the thresholds
\[
\Phi_u^{(0)} \triangleq -A(1+0)^2=-A,\qquad
\Phi_u^{(\max)} \triangleq -A(1+u_{\max})^2,
\]
\[
\Phi_v^{(0)} \triangleq -B(1+0)^2=-B,\qquad
\Phi_v^{(\max)} \triangleq -B(1+v_{\max})^2.
\]

Then the Hamiltonian minimizers in each channel are
\[
u^*(t)=
\begin{cases}
0, & \Phi_u(t)\ge \Phi_u^{(0)},\\[1mm]
-1+\sqrt{-\Phi_u(t)/A}, & \Phi_u^{(\max)}<\Phi_u(t)<\Phi_u^{(0)},\\[1mm]
u_{\max}, & \Phi_u(t)\le \Phi_u^{(\max)},
\end{cases}
\]
and
\[
v^*(t)=
\begin{cases}
0, & \Phi_v(t)\ge \Phi_v^{(0)},\\[1mm]
-1+\sqrt{-\Phi_v(t)/B}, & \Phi_v^{(\max)}<\Phi_v(t)<\Phi_v^{(0)},\\[1mm]
v_{\max}, & \Phi_v(t)\le \Phi_v^{(\max)}.
\end{cases}
\]

Moreover, on the interior regimes the strict Legendre condition holds in the corresponding channel,
i.e., $\partial^2H/\partial u^2>0$ when $\Phi_u^{(\max)}<\Phi_u<\Phi_u^{(0)}$ and
$\partial^2H/\partial v^2>0$ when $\Phi_v^{(\max)}<\Phi_v<\Phi_v^{(0)}$.
\end{proposition}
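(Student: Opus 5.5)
The plan is to exploit the fact that the Hamiltonian \eqref{eq:Hamiltonian} is \emph{separable} in the two controls. Collecting the control-dependent terms of $L+\bm\lambda^\top F$, and using the definitions of $\Phi_u,\Phi_v$, we get
\[
H=H_0(\bm w,\bm\lambda)+h_u(u)+h_v(v),
\]
\[
h_u(u)\triangleq Au+\Phi_u\,\frac{u}{1+u},\qquad h_v(v)\triangleq Bv+\Phi_v\,\frac{v}{1+v}.
\]
Here $H_0$ does not depend on $(u,v)$. Hence minimizing over the rectangle $\Omega$ reduces to two independent scalar problems on $[0,u_{\max}]$ and $[0,v_{\max}]$. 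I will treat the $u$-channel only; the $v$-channel is verbatim with $(B,\Phi_v,v_{\max})$ in place of $(A,\Phi_u,u_{\max})$. At fixed $t$, $\Phi_u$ is just a real number, so this is an elementary one-variable minimization using \eqref{eq:stationarity_partials} and \eqref{eq:convexity_proof}.

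\textbf{Step 1: the case $\Phi_u\ge 0$.} This is the one case where $h_u$ may be concave, and I would dispose of it first. By \eqref{eq:stationarity_partials}, $h_u'(u)=A+\Phi_u/(1+u)^2\ge A>0$ on the whole interval. So $h_u$ is strictly increasing, and its unique minimizer is $u^*=0$. This agrees with the first branch of the formula, since $\Phi_u\ge 0>-A=\Phi_u^{(0)}$. The concavity noted after \eqref{eq:convexity_proof} therefore never produces an interior or upper-bound minimizer.

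\textbf{Step 2: the case $\Phi_u<0$.} Here $h_u''=-2\Phi_u/(1+u)^3>0$, so $h_u$ is strictly convex. Equivalently, $h_u'$ is strictly increasing and goes from $h_u'(0)=A+\Phi_u$ to $h_u'(u_{\max})=A+\Phi_u/(1+u_{\max})^2$. Setting $h_u'=0$ gives $(1+u)^2=-\Phi_u/A$, whose only root with $1+u>0$ is $\hat u=-1+\sqrt{-\Phi_u/A}$. The three regimes then follow by comparing signs at the endpoints:
\begin{itemize}
\item If $\Phi_u\ge -A$, then $h_u'(0)\ge 0$ and $h_u'>0$ on $(0,u_{\max}]$, so $u^*=0$.
\item If $\Phi_u\le -A(1+u_{\max})^2$, then $h_u'(u_{\max})\le 0$ and $h_u'<0$ on $[0,u_{\max})$, so $u^*=u_{\max}$.
\item Otherwise $h_u'(0)<0<h_u'(u_{\max})$, so $\hat u\in(0,u_{\max})$ is the unique critical point. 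By strict convexity it is the unique global minimizer on $[0,u_{\max}]$.
\end{itemize}
Merging Step 1 into the first bullet gives exactly the stated piecewise formula. Since $\Phi_u^{(\max)}<\Phi_u^{(0)}<0$, the three conditions partition $\mathbb{R}$.

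\textbf{Step 3: the Legendre claim.} On the interior regime $\Phi_u<\Phi_u^{(0)}<0$, so \eqref{eq:convexity_proof} gives $\partial^2H/\partial u^2>0$ directly. Alternatively, substituting $\Phi_u=-A(1+u^*)^2$ yields $\partial^2H/\partial u^2=2A/(1+u^*)>0$.

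The only point needing care is that \eqref{eq:convexity_proof} alone does not settle the sign-indefinite curvature: one cannot simply argue by convexity. The case split on the sign of $\Phi_u$ in Step 1 is what handles this. I also need to check the boundary equalities $\Phi_u=\Phi_u^{(0)}$ and $\Phi_u=\Phi_u^{(\max)}$. In each, the derivative vanishes only at an endpoint, so the minimizer is still unique and equal to that endpoint, which is consistent with the non-strict inequalities in the statement.
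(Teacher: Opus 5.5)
Your proposal is correct and follows essentially the same route as the paper: reduce to the scalar function $Au+\Phi_u\,u/(1+u)$ on $[0,u_{\max}]$, read off the endpoint regimes from the sign of $dH/du$, and use $d^2H/du^2=-2\Phi_u/(1+u)^3>0$ (since $\Phi_u<-A<0$) for uniqueness of the interior minimizer and for the Legendre condition. Your explicit separability remark, the preliminary split on the sign of $\Phi_u$, and the check at the threshold equalities are slightly more careful bookkeeping of the paper's argument, which bounds $dH/du$ directly using $(1+u)^2\ge 1$ and $(1+u)^2\le(1+u_{\max})^2$.
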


\begin{proof}
Consider the $u$-channel. Up to terms independent of $u$, the Hamiltonian dependence on $u$ is
$H(u)=Au+\Phi_u\,\frac{u}{1+u}$ on $[0,u_{\max}]$. Hence,
\[
\frac{dH}{du}=A+\frac{\Phi_u}{(1+u)^2},\qquad
\frac{d^2H}{du^2}=\frac{-2\Phi_u}{(1+u)^3}.
\]

If $\Phi_u\ge -A$, then $\frac{dH}{du}\ge 0$ for all $u\ge 0$ and the minimizer is $u^*=0$.
If $\Phi_u\le -A(1+u_{\max})^2$, then $\frac{dH}{du}\le 0$ on $[0,u_{\max}]$ and the minimizer is $u^*=u_{\max}$.
Otherwise, $\Phi_u\in\big(-A(1+u_{\max})^2,-A\big)$ and there is a unique root of $\frac{dH}{du}=0$ at
$u=-1+\sqrt{-\Phi_u/A}\in(0,u_{\max})$. On this interior regime $\Phi_u<0$, so $\frac{d^2H}{du^2}>0$ and the stationary point is the unique minimizer. The proof for the $v$-channel is identical with $(B,\Phi_v,v_{\max})$ replacing $(A,\Phi_u,u_{\max})$.
\end{proof}

Equivalently, solving $\partial H/\partial u=0$ and $\partial H/\partial v=0$ yields the candidate interior controls, and projecting onto $\Omega$ gives the compact expressions
\begin{align}
    u^*(t) &= \min\left\{u_{\max},\; \max\left\{0,\; -1 + \sqrt{\frac{-\Phi_u(t)}{A}} \right\}\right\},
    \label{eq:optimal_u} \\
    v^*(t) &= \min\left\{v_{\max},\; \max\left\{0,\; -1 + \sqrt{\frac{-\Phi_v(t)}{B}} \right\}\right\}.
    \label{eq:optimal_v}
\end{align}

\section{Hybrid numerical optimization framework}
\label{sec:numerical_method}

Although Section~\ref{sec:necessary_optimality} provides a structural PMP characterization of the optimal controls, computing complete optimal trajectories remains a challenging nonlinear optimization problem. The optimal control problem is strongly nonlinear due to the coupled tumor--immune dynamics and the saturating (Michaelis--Menten) therapy channels. After direct transcription, the continuous-time problem becomes a large nonconvex nonlinear program (NLP). In such multi-modal settings, purely gradient-based solvers can be sensitive to initialization and may converge to suboptimal basins~\cite{Betts2009,vonStryk1992}. To improve robustness, we employ a two-stage hybrid framework that combines global stochastic exploration (QPSO) with deterministic local refinement (SQP), following our earlier hybrid-QPSO approach for therapy scheduling~\cite{KidaneCDC2025}.

\paragraph{\emph{Stage 1: stochastic global search (QPSO)}}
We first apply Quantum Particle Swarm Optimization (QPSO) to obtain a reliable warm start by locating a high-quality basin of attraction near feasibility. The controls are parameterized on a coarse grid of $N\!+\!1$ nodes (with $N=100$), i.e.,
\[
\mathbf{u}=\{u_k\}_{k=0}^{N},\qquad \mathbf{v}=\{v_k\}_{k=0}^{N},\qquad
0\le u_k\le u_{\max},\; 0\le v_k\le v_{\max}.
\]

For each candidate discrete control trajectory, the state dynamics are simulated using \texttt{ode45}, with $(u,v)$ linearly interpolated between nodes. Because QPSO does not enforce hard equality constraints directly, particles are ranked using the augmented objective
\begin{equation}
J_{\mathrm{aug}}(\mathbf{u},\mathbf{v};T)
=
\int_{0}^{T}\!\Big(\ell_1 x + \ell_2 y + \ell_3 z + A u + B v + C\Big)\,dt
+\rho\,\big(x(T)-x_f\big)^2,
\label{eq:J_aug_qpso}
\end{equation}
where the running integral is computed by trapezoidal quadrature on the simulation time grid. The quadratic terminal penalty is introduced only to steer the derivative-free swarm toward the terminal manifold $x(T)=x_f$ and is used solely for warm-start generation (it is not part of the final constrained solve). In our implementation, $\rho=10^{6}$. The QPSO stage uses a swarm size $N_p=50$ and runs for $k_{\max}=100$ iterations. The best particle provides a coarse control trajectory $(\tilde{u},\tilde{v})$ that captures the global structure but may contain particle-induced oscillations. This coarse, globally informed solution is then refined deterministically within a constrained collocation framework.

\paragraph{\emph{Stage 2: deterministic refinement (smoothing, collocation, SQP).}}
The QPSO control trajectory is smoothed using a Gaussian kernel (window width $\sigma_{\mathrm{smooth}}=10$ nodes) to suppress high-frequency oscillations while preserving large-scale structure. This smoothed control is then used to initialize a direct collocation transcription on the same $N\!+\!1$ node grid. Free final time is handled by treating $T$ as a decision variable and using the step size $h=T/N$. Let $\mathbf{w}_k=[x_k,y_k,z_k]^\top$ and $\mathbf{u}_k=[u_k,v_k]^\top$ denote the discrete state and control at node $k$. Using trapezoidal collocation, the defect constraints are
\begin{equation}
\mathbf{w}_{k+1}-\mathbf{w}_k
-\frac{h}{2}\Big(F(\mathbf{w}_k,\mathbf{u}_k)+F(\mathbf{w}_{k+1},\mathbf{u}_{k+1})\Big)=\mathbf{0},
\qquad k=0,\dots,N-1,
\label{eq:trap_defects}
\end{equation}
together with the initial condition $\mathbf{w}_0=\mathbf{w}_0^{\mathrm{given}}$ and the \emph{hard} terminal equality constraint $x_N=x_f$. In addition, we enforce state nonnegativity at all nodes: $x_k\ge0$, $y_k\ge0$, $z_k\ge0$.

The resulting decision vector is
\[
\mathbf{z}
=
\Big(\{\mathbf{w}_k\}_{k=0}^{N},\;\{\mathbf{u}_k\}_{k=0}^{N},\;T\Big),
\]
and the NLP objective is evaluated by trapezoidal quadrature:
\[
J(\mathbf{z})
\approx
\sum_{k=0}^{N-1}\frac{h}{2}\Big(L(\mathbf{w}_k,\mathbf{u}_k)+L(\mathbf{w}_{k+1},\mathbf{u}_{k+1})\Big),
\quad
L=\ell_1 x+\ell_2 y+\ell_3 z + A u + B v + C.
\]

We solve this NLP using MATLAB \texttt{fmincon} with the SQP algorithm. Importantly, the reported solutions solve the original constrained problem: the terminal condition $x_N=x_f$ and the collocation defects are enforced as constraints and are driven below the specified tolerances (\texttt{ConstraintTolerance} $=10^{-6}$ and \texttt{OptimalityTolerance} $=10^{-6}$), up to \texttt{fmincon}'s internal scaling.

\paragraph{\emph{Implementation details and reproducibility.}}
For reproducibility, Table~\ref{tab:solver_settings} lists all numerical settings required to reproduce the reported solutions, including the terminal penalty form/weight, the free-time transcription $h=T/N$, and the exact decision-vector layout. The QPSO stage uses MATLAB's pseudorandom number generator; for exact replication of a run, the RNG seed can be fixed via \texttt{rng(seed)} prior to the QPSO initialization. The QPSO rollout uses \texttt{ode45} with \texttt{RelTol} $=10^{-3}$ and \texttt{AbsTol} $=10^{-3}$, while the SQP refinement uses \texttt{fmincon} with \texttt{MaxIterations}$=5000$ and \texttt{MaxFunctionEvaluations}$=3\times10^{6}$.

% ---------------------------------------------------------------
%  TABLE 2 — Solver settings
%  FIX: replaced {lcc} raw tabular with tabularx + \small
% ---------------------------------------------------------------
\begin{table}[!htbp]
\centering
\small
\caption{Numerical solver parameters and implementation settings (MATLAB).}
\label{tab:solver_settings}
\begin{tabularx}{\linewidth}{@{}LCC@{}}
\toprule
Parameter & Symbol / Option & Value \\
\midrule
\multicolumn{3}{@{}l@{}}{\textit{QPSO (global warm start)}}\\
Control grid nodes & $N+1$ & $101$ ($N=100$) \\
Swarm size & $N_p$ & $50$ \\
Max iterations & $k_{\max}$ & $100$ \\
Terminal penalty form & $J_{\mathrm{aug}}$ & $J+\rho(x(T)-x_f)^2$ \\
Terminal penalty weight & $\rho$ & $10^{6}$ \\
ODE rollout solver & -- & \texttt{ode45} \\
ODE tolerances & \texttt{RelTol}, \texttt{AbsTol} & $10^{-3},\,10^{-3}$ \\
Smoothing kernel & $\sigma_{\mathrm{smooth}}$ & 10 nodes (Gaussian) \\
\midrule
\multicolumn{3}{@{}l@{}}{\textit{Collocation + SQP refinement}}\\
Collocation method & -- & Trapezoidal \\
Time discretization & $h$ & $h=T/N$ (free $T$) \\
Constraints & -- & Defects $+\, x_N=x_f$ $+$ positivity \\
Local solver & -- & \texttt{fmincon} (SQP) \\
Max iterations & \texttt{MaxIterations} & $5000$ \\
Max function evals & \texttt{MaxFunctionEvaluations} & $3\times 10^6$ \\
Tolerance (constraints) & \texttt{ConstraintTolerance} & $10^{-6}$ \\
Tolerance (optimality) & \texttt{OptimalityTolerance} & $10^{-6}$ \\
\bottomrule
\end{tabularx}
\end{table}

\FloatBarrier

\section{Numerical results and discussion}
\label{sec:results}

This section reports the optimal therapeutic schedules computed with the proposed hybrid QPSO--SQP framework. The need for a hybrid strategy is practical rather than cosmetic: after direct transcription, the problem becomes a large nonconvex NLP due to the coupled nonlinear dynamics and the rational Michaelis--Menten control map. In preliminary trials, a standalone SQP method initialized from uninformed guesses frequently converged to suboptimal basins (e.g., static monotherapy profiles or premature boundary switching). The QPSO stage provides global exploration to identify a reliable basin of attraction, while the SQP stage enforces high-accuracy feasibility and optimality on a refined collocation grid.

All simulations use the normalized parameters in Table~\ref{tab:parameters} with initial conditions $x_0=650$, $y_0=0.1$, and $z_0=400$. The terminal condition $x(T)=x_f$ is imposed as a hard equality constraint in the NLP and satisfied to a feasibility tolerance of $10^{-6}$, ensuring that tumor-reduction targets are met strictly rather than asymptotically. Unless otherwise stated, controls are normalized to $(u,v)\in[0,1]$.

\subsection{Balanced objectives and emergence of interior modulation (Case A)}
We first consider the balanced regime with $\ell=(\ell_1,\ell_2,\ell_3)=(0,0,0)$ and exposure/time penalties $(A,B,C)=(1,\,0.3,\,1)$. This case is the primary demonstration of the paper's central claim: in a saturated pharmacodynamic model, exposure penalties produce \emph{regular interior arcs} (continuous modulation) rather than purely boundary-driven bang--bang schedules.

Figure~\ref{fig:case_A_solution} summarizes the optimal state and control trajectories. Panel (a) shows that the tumor burden $x(t)$ decreases monotonically to the enforced terminal target $x(T)=x_f\le 1$. The antigen signal $z(t)$ exhibits a transient rise and fall, while the immune population $y(t)$ remains low early and then increases sharply late in the therapy. Panel (b) shows the corresponding control structure: chemotherapy $u(t)$ begins near the upper bound to drive early cytoreduction, transitions through a continuously varying \emph{titration} interval, and then shuts off; immunotherapy $v(t)$ follows a broad interior pulse.

% FIX: changed width=\textwidth to width=\linewidth for all figures
\begin{figure}[!htbp]
    \centering
    \includegraphics[width=\linewidth,keepaspectratio]{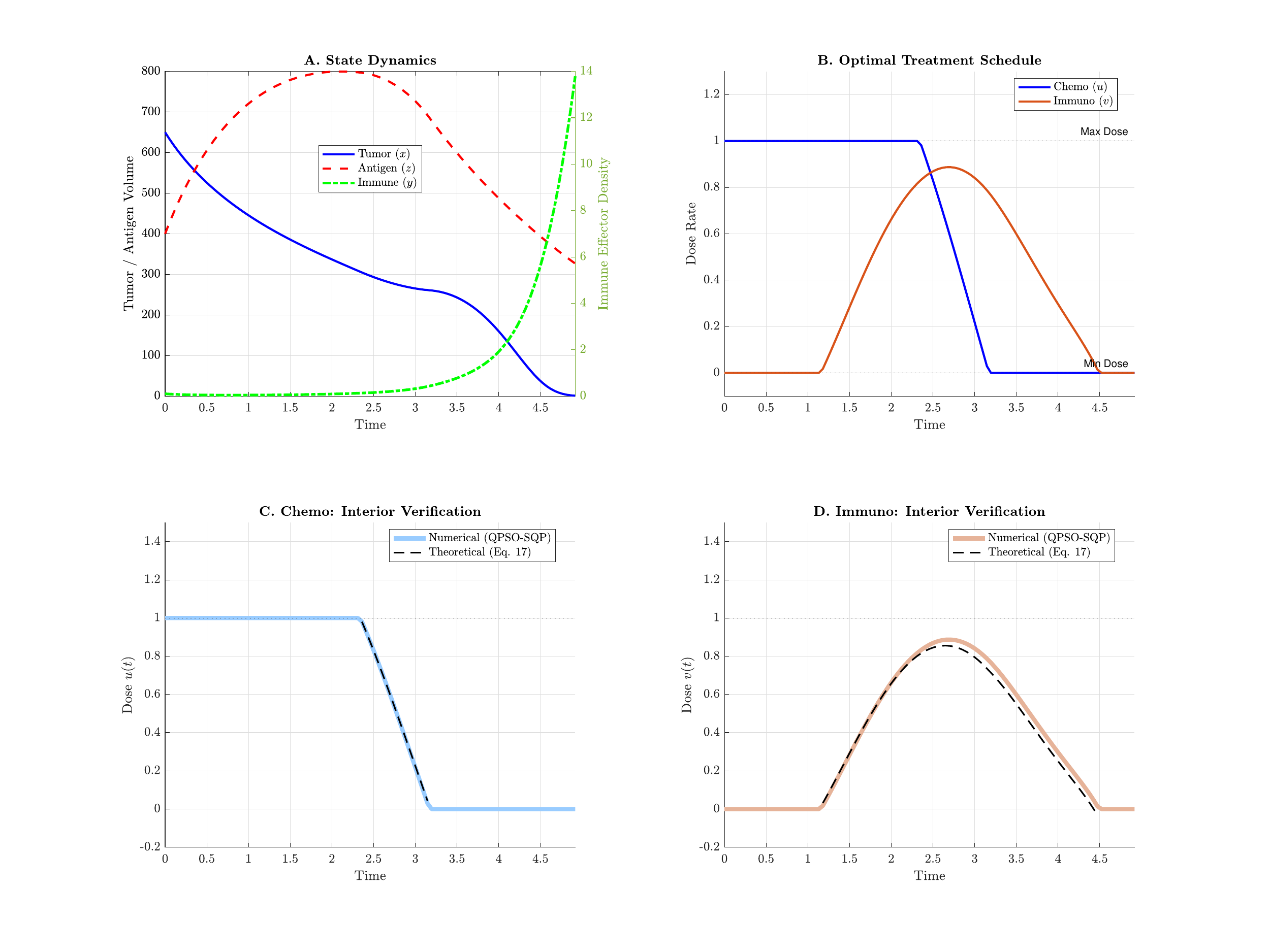}
    \caption{Case A (balanced objectives): $\ell=(0,0,0)$ and $(A,B,C)=(1,0.3,1)$ with free final time. (a) State trajectories showing monotone tumor reduction, a transient antigen response, and late immune amplification. (b) Optimal controls exhibiting a mixed structure: an initial boundary arc for chemotherapy, an interior titration interval, and a broad interior immunotherapy pulse. (c--d) Verification overlays comparing the collocation solution (solid) with the analytical interior-arc feedback law (dashed) on intervals where the numerical control is interior.}
    \label{fig:case_A_solution}
\end{figure}

The presence of continuous titration is a direct consequence of diminishing returns induced by the Michaelis--Menten map $u\mapsto u/(1+u)$. As $u$ increases, the marginal effect on the state dynamics saturates, while the linear exposure term $Au$ increases at a constant rate. The optimizer therefore reduces dosing smoothly once the marginal benefit falls below the marginal exposure cost, producing an interior arc rather than sustained saturation. Quantitatively, the terminal target is reached at $T\approx 4.92$ with cumulative chemotherapy exposure $\int_0^T u(t)\,dt\approx 2.77$ and cumulative immunotherapy exposure $\int_0^T v(t)\,dt\approx 1.79$ (in normalized dose-time units).

Although immune depletion is not directly penalized here ($\ell_2=0$), the optimal strategy does not suppress the immune compartment. Instead, the computed schedule preserves immune competence and leverages the endogenous predation term $-\theta x y$ to complete tumor clearance with reduced reliance on chemotherapy, yielding an emergent chemo-sparing effect consistent with the coupled model structure.

\begin{figure}[!htbp]
    \centering
    % FIX: width=\linewidth instead of width=\textwidth
    \includegraphics[width=\linewidth,keepaspectratio]{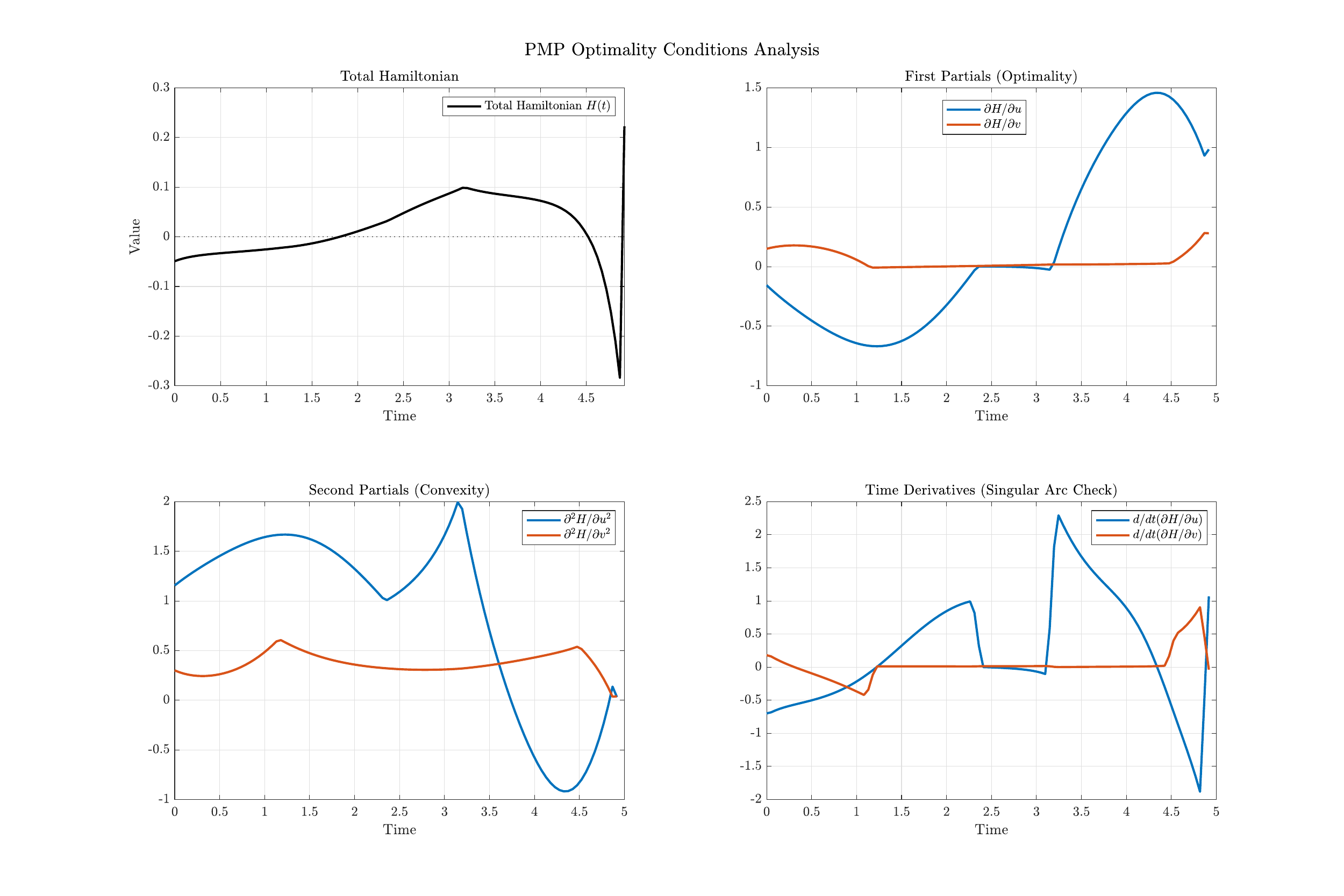}
    \caption{PMP diagnostics for Case A. (a) Hamiltonian trace for the free-final-time autonomous formulation (interpreted as a consistency check). (b) First partials $\partial H/\partial u$ and $\partial H/\partial v$, which vanish only on interior arcs and satisfy sign conditions on boundary arcs. (c) Second partials $\partial^2 H/\partial u^2$ and $\partial^2 H/\partial v^2$; positivity on interior stationary intervals is consistent with the strict Legendre condition. (d) Time derivatives of the first partials, highlighting transitions between boundary and interior regimes.}
    \label{fig:case_A_pmp}
\end{figure}

Additional PMP diagnostics for Case~A are reported in Figure~\ref{fig:case_A_pmp} and are intended as compact consistency checks of the arc classification (boundary vs.\ interior). Panel (a) plots the Hamiltonian trace; for the autonomous free-final-time formulation it should be approximately constant (and satisfy the terminal condition), so large deviations would indicate inconsistency. Panel (b) shows the first partials $\partial H/\partial u$ and $\partial H/\partial v$: these vanish only on interior arcs where $0<u^*(t)<u_{\max}$ (resp.\ $0<v^*(t)<v_{\max}$), while on boundary arcs they satisfy the corresponding sign conditions that select $u^*(t)\in\{0,u_{\max}\}$ and $v^*(t)\in\{0,v_{\max}\}$. Panel (c) reports $\partial^2H/\partial u^2$ and $\partial^2H/\partial v^2$; positivity on interior intervals is consistent with the strict Legendre condition and supports that the continuously modulated segments are regular interior minimizers rather than singular-control behavior. Panel (d) plots the time derivatives of the first partials to make the transitions between boundary and interior regimes visible. The adjoint system and the analytic interior-arc expressions used for the overlays are given in Appendix~\ref{app:adjoint}--\ref{app:arcs}.

\subsection{Mechanism Isolation via asymmetric regularization (Case B)}
\label{subsec:caseB_asym_reg}

To isolate the role of exposure regularization \emph{per control channel}, we consider an asymmetric weighting regime in which the chemotherapy penalty is retained while the immunotherapy penalty is removed. Specifically, we set
\[
\ell_1=\ell_2=\ell_3=0,\qquad A=1,\qquad B=0,\qquad C=1,
\]
and impose the same terminal constraint $x(T)=x_f$. This targeted ablation bridges the fully regularized Case~A and the minimum-time limit Case~C.

\begin{figure}[!htbp]
    \centering
    % FIX: width=\linewidth instead of width=\textwidth
    \includegraphics[width=\linewidth,keepaspectratio]{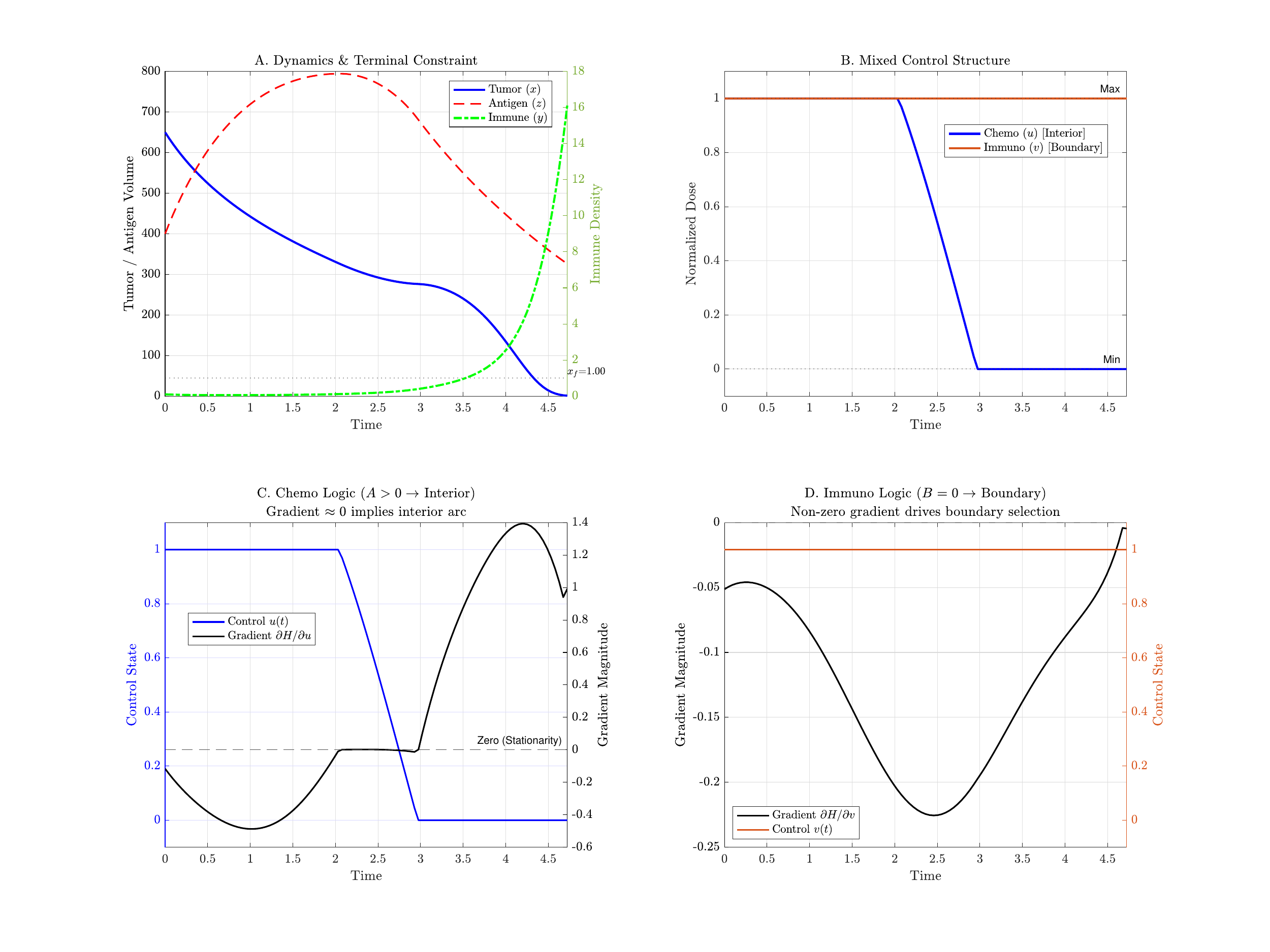}
    \caption{Case B (asymmetric regularization: $A=1$, $B=0$, $C=1$). (A) State trajectories satisfying the terminal tumor constraint. (B) The optimal controls bifurcate by channel: chemotherapy $u(t)$ retains an interior titration segment due to $A>0$, whereas immunotherapy $v(t)$ is boundary-selected and saturates at $v_{\max}$ when $B=0$. (C--D) PMP diagnostics: $\partial H/\partial u$ is pinned to zero on the interior chemotherapy interval (stationarity), while $\partial H/\partial v$ maintains a sign consistent with boundary selection in the unregularized immunotherapy channel.}
    \label{fig:case_B_solution}
\end{figure}

Figure~\ref{fig:case_B_solution} shows that the optimal policy becomes structurally asymmetric. Because $A>0$, the Hamiltonian is regularized in $u$ and admits an interior minimizer; accordingly, the chemotherapy profile retains an interior titration segment with $0<u(t)<u_{\max}$. In this interval, diminishing returns from the Michaelis--Menten map compete with the linear exposure cost, producing a smooth taper from saturation toward shutoff. In contrast, setting $B=0$ removes exposure regularization in the immunotherapy channel. Although the pharmacodynamic term $\frac{v}{1+v}$ remains nonlinear, the Hamiltonian becomes \emph{monotone} with respect to $v$ (i.e., it has no interior turning point), so the minimizer is selected by the PMP sign rule and the immunotherapy input collapses to a boundary arc (here, $v(t)=v_{\max}$ over the active window).

The PMP diagnostics provide a direct explanation for this mixed structural type. On the interior chemotherapy interval (Figure~\ref{fig:case_B_solution}(C)), optimality requires the stationarity condition $\partial H/\partial u = 0$, and the computed stationarity residual is numerically pinned to zero precisely where $u(t)$ lies strictly inside its bounds. By contrast, for the unregularized immunotherapy channel (Figure~\ref{fig:case_B_solution}(D)), optimality is governed by a sign condition rather than stationarity; accordingly, $\partial H/\partial v$ does not settle at zero and instead maintains the sign that selects the observed boundary action.

Case~B therefore strengthens the main causal claim of the paper: interior modulation is not a byproduct of discretization or solver smoothing, but appears selectively in the channels where exposure regularization is present. This confirms that the continuously modulated schedules in Case~A are a structural consequence of regularization interacting with saturating kinetics, and it also demonstrates that the hybrid QPSO--SQP framework can robustly resolve mixed arc types within a single optimal trajectory.

\subsection{Validation via the minimum-time limiting case}
\label{subsec:caseC_min_time}

To isolate the mechanism responsible for the continuously modulated interior arcs in Cases~A--B, we consider the \emph{minimum-time limiting regime} (Case~C) obtained by removing \emph{all} running penalties on state and drug exposure:
\[
\ell_1=\ell_2=\ell_3=0,\qquad A=B=0,\qquad C>0.
\]

With the terminal tumor requirement enforced as a hard constraint, $x(T)=x_f$, the objective reduces to
\[
J=\int_{0}^{T} C\,dt = C\,T,
\]
so the problem becomes a terminal-constrained minimum-time formulation. In this limit, the Hamiltonian remains \emph{nonlinear} in the controls through the saturating maps $u/(1+u)$ and $v/(1+v)$, but it is \emph{unregularized} (non-coercive) with respect to $(u,v)$ because there is no control-exposure cost. Consequently, on intervals where the switching potentials are nonzero, the Hamiltonian is monotone in each control channel and admits no interior stationary minimizer; the PMP condition therefore reduces to a \emph{boundary-selection rule} (bang--bang-type behavior, with isolated switching times).

Figure~\ref{fig:case_C_min_time} summarizes the resulting structure. Panel~(A) shows that the tumor burden decreases monotonically to the prescribed target $x_f$, but the qualitative nature of the optimal control collapses to a boundary strategy. Panel~(B) displays the corresponding inputs: chemotherapy saturates at $u_{\max}$ over an initial arc to achieve the fastest admissible cytoreduction and then switches abruptly to $u(t)=0$; the immunotherapy input likewise occupies an extreme arc over the horizon (here, $v(t)\approx v_{\max}$ in the computed solution), reflecting the same boundary-selection mechanism when $B=0$.

\begin{figure}[!htbp]
    \centering
    % FIX: width=\linewidth instead of width=\textwidth
    \includegraphics[width=\linewidth,keepaspectratio]{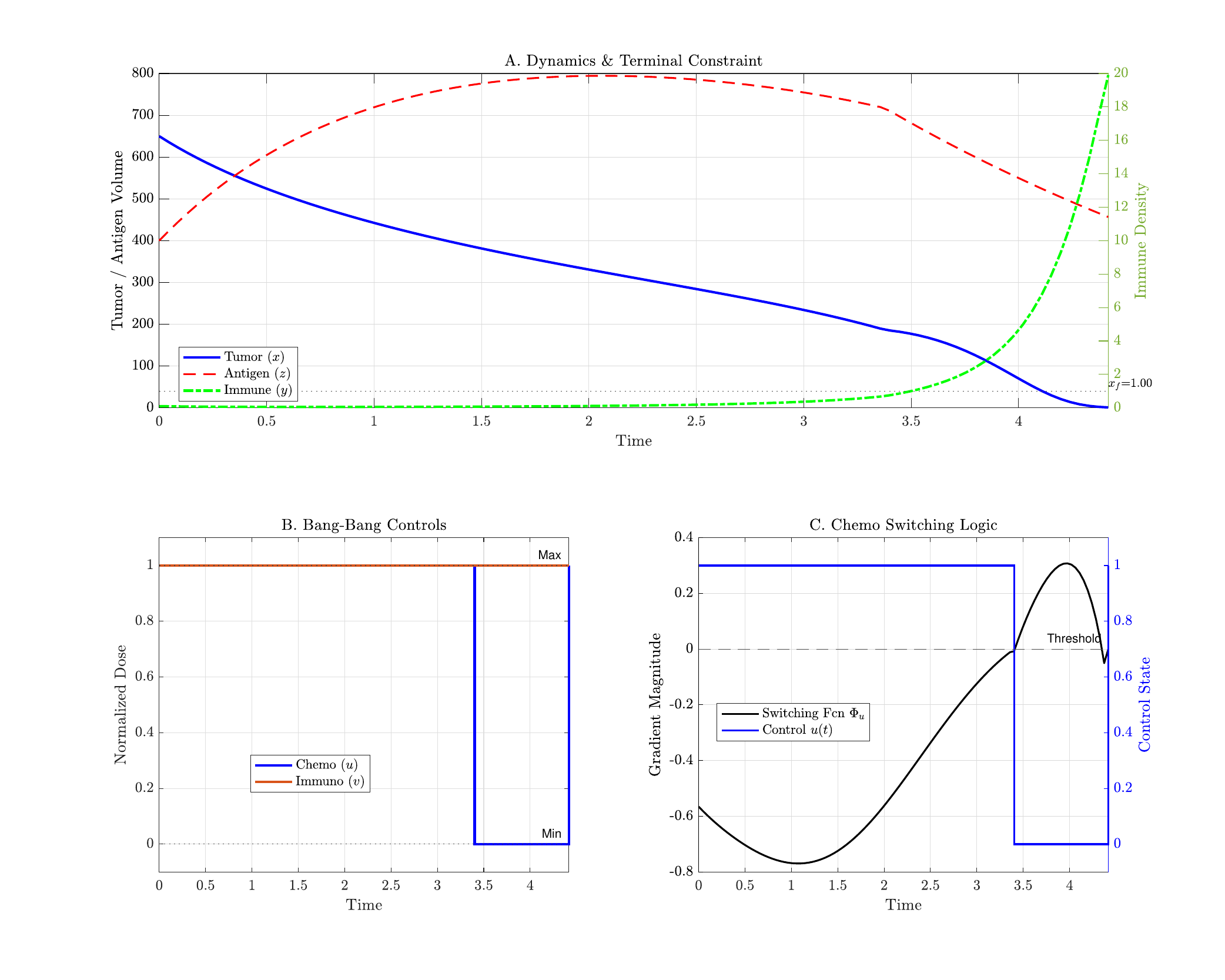}
    \caption{Case~C (minimum-time limit): $\ell_1=\ell_2=\ell_3=A=B=0$, $C>0$.
    \textbf{(A)} State evolution under the minimum-time policy.
    \textbf{(B)} Optimal inputs collapse to boundary arcs; in particular, $u(t)$ is bang--bang.
    \textbf{(C)} PMP switching logic for chemotherapy: the switching function $\Phi_u(t)$ crosses the switching surface $\Phi_u=0$ at $t\approx t_s$, and the bang--bang switch in $u(t)$ occurs at the same instant.}
    \label{fig:case_C_min_time}
\end{figure}

The key verification is shown directly in Figure~\ref{fig:case_C_min_time}(C). The chemotherapy switching function $\Phi_u(t)$ exhibits a \emph{clean transversal} zero-crossing at $t\approx t_s\approx 3.4$, and the control switches at the same instant. The transversal crossing (nonzero slope at the crossing) indicates a \emph{strict switching} event, ruling out a sustained singular interval ($\Phi_u(t)\equiv 0$ on a nontrivial time set) in this regime. Accordingly, the PMP boundary rule selects the admissible extremes based on the sign of the switching function:
\[
u^*(t)=
\begin{cases}
u_{\max}, & \Phi_u(t)<0,\\[2mm]
0, & \Phi_u(t)>0,
\end{cases}
\qquad
(\text{switching at } \Phi_u(t_s)=0),
\]
up to sets of measure zero.

Case~C therefore functions as a rigorous \emph{negative control}, isolating the causal mechanism that determines the control structure. When exposure regularization is removed ($A=B=0$), interior stationary arcs are generically eliminated and the extremal undergoes a structural bifurcation to a boundary-driven, discontinuous (bang--bang-type) policy. By contrast, in Cases~A--B, strictly positive exposure weights $(A,B>0)$ restore coercivity of the Hamiltonian in the controls and enable well-defined interior minimizers; coupled with Michaelis--Menten saturation, this regularization yields continuous dose titration. The comparison confirms that the smooth modulation observed in Cases~A--B is not a discretization artifact of the QPSO--SQP framework, but an intrinsic structural consequence of exposure regularization interacting with saturating pharmacodynamics.

\subsection{Sensitivity to admissible dose bounds and local parameter robustness}
\label{subsec:bound_sensitivity}

\paragraph{\emph{Sensitivity to admissible dose bounds under normalized saturation scaling.}}
A standard concern in bounded optimal control is whether continuously modulated schedules arise only because an upper bound is active. To assess how the optimal structure depends on admissible dose limits in our \emph{normalized} saturation model $u/(1+u)$ and $v/(1+v)$, we re-solve the balanced problem (Case~A) under a sweep of relaxed maxima
\[
u_{\max}=v_{\max}\in\{1,\;1.5,\;2\},
\]
and compare the resulting optimal schedules in normalized time $\tau=t/T\in[0,1]$. Figure~\ref{fig:bound_independence} overlays the chemotherapy and immunotherapy controls across this sweep.

Two observations are consistent across the tested range. First, after time normalization, the control trajectories remain similar in overall shape and in the timing of switching/taper regions. Second, increasing $u_{\max}$ and $v_{\max}$ does not simply push the solution to the new saturation limits: the optimizer continues to spend substantial time in an intermediate transition region of the Michaelis--Menten map where marginal benefit is high relative to linear exposure cost, and the fraction of time spent at the upper bound does not increase monotonically. This supports the interpretation that interior modulation in Case~A is selected by the saturation--regularization trade-off rather than being solely enforced by an active upper-bound constraint.

\noindent \textit{Interpretation under normalization.}
Because we use the normalized form $u/(1+u)$ (equivalently $K_u=1$) and $v/(1+v)$ (equivalently $K_v=1$), changing $u_{\max}$ and $v_{\max}$ also changes how far the admissible range extends beyond the half-saturation point. Thus, this sweep should be viewed as a sensitivity check to admissible dose bounds \emph{under the fixed normalized saturation scaling}, rather than a bound-independence claim that is invariant to the choice of half-saturation constants.

\begin{figure}[!htbp]
    \centering
    % FIX: width=\linewidth instead of width=\textwidth
    \includegraphics[width=\linewidth,keepaspectratio]{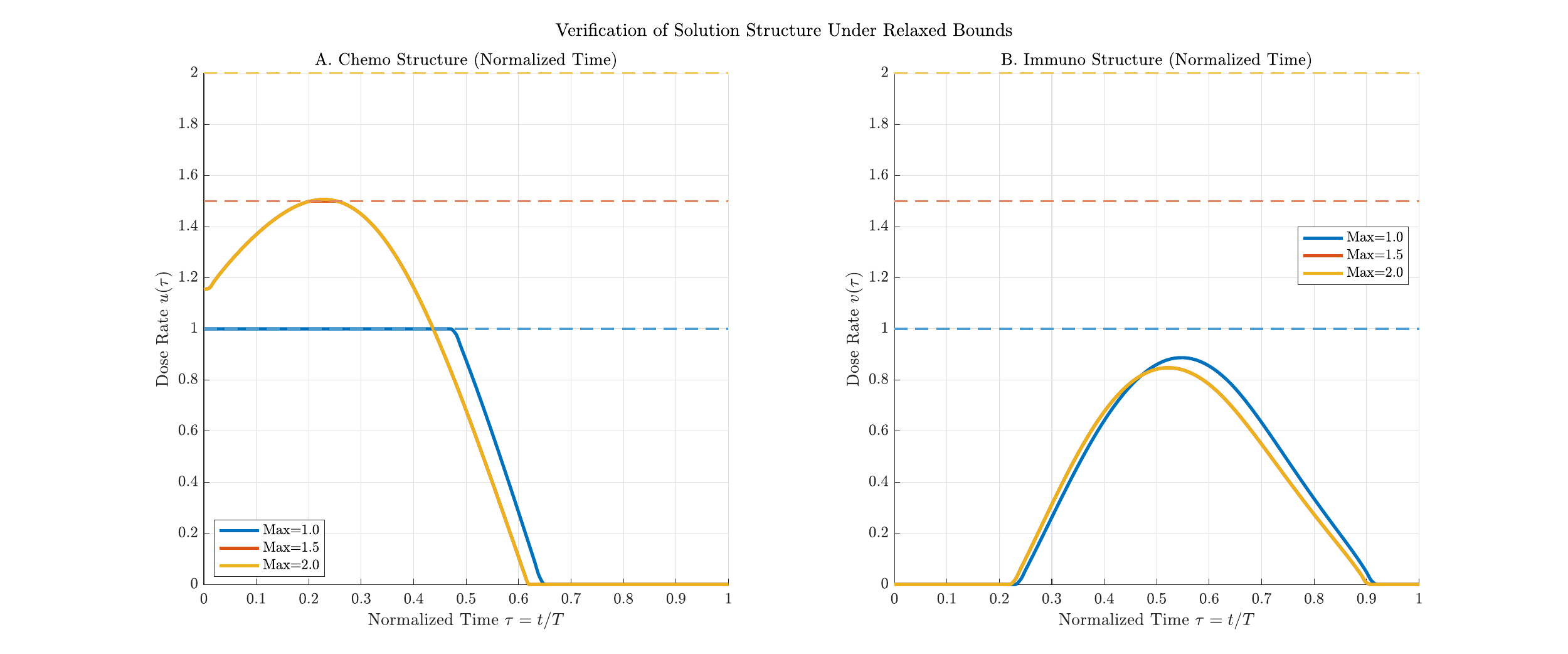}
    \caption{Sensitivity to admissible dose bounds for the balanced objective (Case~A), shown in normalized time $\tau=t/T$. Optimal controls are overlaid for $u_{\max}=v_{\max}\in\{1,1.5,2\}$. The qualitative structure persists under relaxed bounds and does not trivially saturate at the enlarged maxima, consistent with an interior structure selected by saturation interacting with exposure regularization.}
    \label{fig:bound_independence}
\end{figure}

\begin{figure}[!htbp]
    \centering
    % FIX: width=\linewidth instead of width=\textwidth
    \includegraphics[width=\linewidth,keepaspectratio]{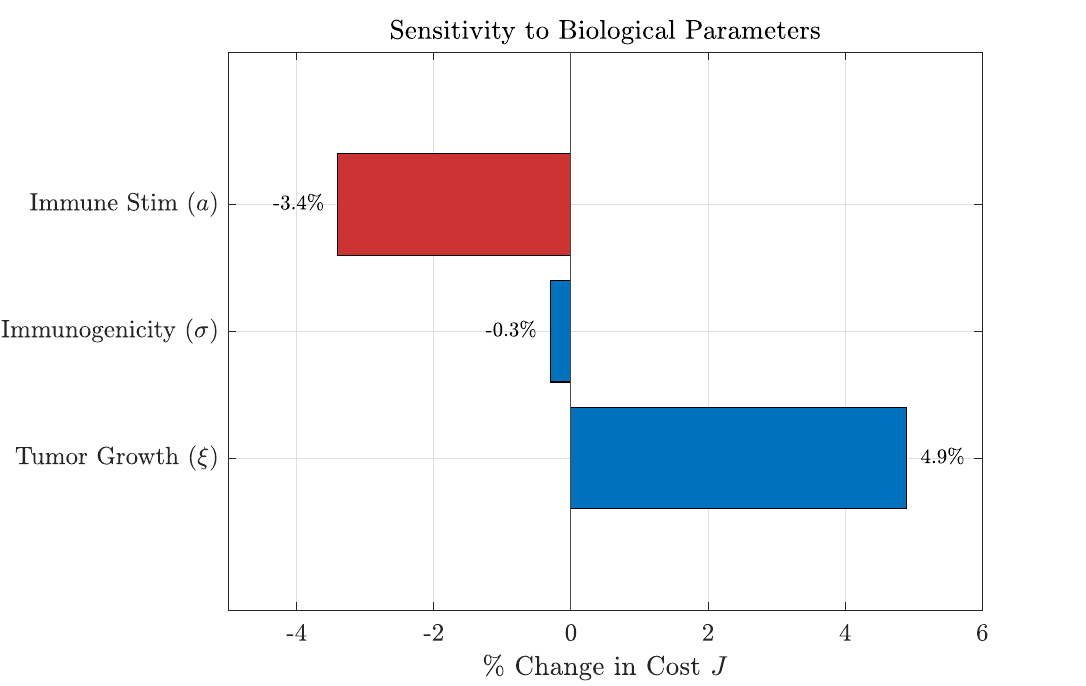}
    \caption{Local parameter sensitivity of the optimal objective value $J^*$ under $\pm 20\%$ one-at-a-time perturbations. Each bar corresponds to \emph{re-optimizing} the OCP for the perturbed parameter set and reporting the relative change in the resulting optimal cost.}
    \label{fig:sensitivity}
\end{figure}

\paragraph{\emph{Local parameter robustness via re-optimization (one-at-a-time perturbations).}}
To quantify robustness to moderate biological uncertainty, we perform a one-at-a-time local sensitivity study in which selected parameters are perturbed by $\pm 20\%$ and the optimal control problem is \emph{re-solved} for each perturbed parameter set using the same QPSO--SQP pipeline and solver tolerances as in the nominal case. That is, for each perturbed parameter $\tilde{p}$ we compute a new optimal solution $(u^*_{\tilde{p}}(\cdot),v^*_{\tilde{p}}(\cdot),T^*_{\tilde{p}})$ satisfying the hard terminal constraint $x(T)=x_f$ to the stated feasibility tolerance, and we record the corresponding optimal objective value $J^*_{\tilde{p}}$.

Figure~\ref{fig:sensitivity} reports the relative change in optimal cost, $(J^*_{\tilde{p}}-J^*_0)/J^*_0$, where $J^*_0$ denotes the nominal optimum. In the tested regime, the optimal cost is comparatively less sensitive to moderate variations in immune-related parameters (e.g., immune stimulation $a$ and intrinsic immunogenicity $\sigma$) and more sensitive to parameters governing tumor growth kinetics. Overall, the re-optimization-based sensitivity results indicate that the qualitative dosing strategy and its performance are not narrowly tuned to a single parameter realization, and that moderate mismatch does not qualitatively destabilize the derived schedules.

\subsection{Comparative synthesis}
\label{subsec:comparison_synthesis}

Taken together, the three case studies show that the qualitative structure of the optimal therapy is governed by exposure regularization acting through saturating pharmacodynamics. Case~A (balanced regularization) admits mixed boundary--interior schedules; Case~B (asymmetric regularization) yields channel-specific structure, with interior modulation appearing only in the regularized channel and boundary selection in the unregularized channel; and Case~C (minimum-time limit) recovers the bang--bang-type boundary-selected limit. Quantitative outcomes are summarized in Table~\ref{tab:comparison}. Relative to the minimum-time baseline (Case~C), Case~A reduces cumulative chemotherapy exposure while maintaining a comparable terminal time. Case~B further reduces chemotherapy exposure by shifting more of the therapeutic burden toward immunotherapy, thereby making the trade-off between cytotoxic load, treatment speed, and reliance on immune-mediated control explicit.

% ---------------------------------------------------------------
%  TABLE 3 — Comparative performance metrics
%  FIX: replaced {lccc} raw tabular with tabularx + \small;
%       used C columns so long "Structure" entries wrap cleanly.
% ---------------------------------------------------------------
\begin{table}[!htbp]
\centering
\small
\caption{Comparative performance metrics. $D_u=\int_0^T u(t)\,dt$ denotes
  cumulative chemotherapy exposure and $D_v=\int_0^T v(t)\,dt$ denotes
  cumulative immunotherapy exposure (normalized dose-time units).
  Percentage reductions are computed relative to the minimum-time
  baseline (Case~C) using $100\,(D_u^{\text{case}}-D_u^{C})/D_u^{C}$.}
\label{tab:comparison}
\begin{tabularx}{\linewidth}{@{}lCCC@{}}
\toprule
\textbf{Metric}
  & \makecell{\textbf{Case A}\\\textbf{(Balanced)}}
  & \makecell{\textbf{Case B}\\\textbf{(Asymmetric)}}
  & \makecell{\textbf{Case C}\\\textbf{(Min-Time)}} \\
\midrule
Time ($T$)     & 4.917  & 4.723  & 4.417 \\
Chemo ($D_u$)  & 2.771  & 2.528  & 3.401 \\
Immuno ($D_v$) & 1.785  & 4.723  & 4.417 \\
Structure      & Boundary + interior & Channel-specific & Bang--bang-type \\
\midrule
Chemo reduction vs.\ Case C & $-18.5\%$ & $-25.7\%$ & Baseline \\
\bottomrule
\end{tabularx}
\end{table}

\FloatBarrier

\section{Conclusion}
\label{sec:conclusion}

This paper studied a threshold-reaching chemo--immunotherapy scheduling problem in which the therapy channels follow saturating (Michaelis--Menten) pharmacodynamics. The central structural result is that, with strictly positive exposure penalties, the Hamiltonian minimization in each control channel admits an explicit three-regime law: depending on the switching potential, the optimal input is chosen at the lower bound, at the upper bound, or as a unique interior minimizer. Whenever an interior regime is active, the strict Legendre condition holds, so the resulting titration segments are regular PMP interior arcs rather than singular-control phenomena. This yields a clear structural contrast with control-affine formulations, where linear-in-control dynamics coupled with linear exposure costs drive bang--bang-type minimizers generically (absent singular arcs), whereas saturating pharmacodynamics make interior operation optimal without requiring quadratic smoothing terms.

Numerically, the saturation-induced rational nonlinearities produce a nonconvex direct transcription. To obtain reliable solutions, we used a hybrid QPSO--SQP strategy: QPSO provides global exploration to identify a high-quality basin of attraction, and SQP enforces feasibility and first-order optimality on a refined collocation grid. Across Cases~A--C and the additional diagnostic studies, the computed solutions satisfy the hard terminal tumor constraint and exhibit arc structures consistent with the PMP characterization, including the predicted transitions between boundary and interior regimes.

Several limitations should be noted. The model is deliberately minimal: it does not include explicit pharmacokinetics, treatment delays, state-path toxicity constraints, or tumor heterogeneity/resistance mechanisms, and the therapy coefficients are illustrative under normalized dosing. These extensions are important for patient-specific translation and will be pursued in future work. Nevertheless, within the present scope, the results establish a principled link between saturating pharmacodynamics and optimal-control structure: exposure regularization coupled with saturation is sufficient to generate interpretable, continuously modulated dosing arcs as direct consequences of PMP.

\section*{Acknowledgments}
The authors acknowledge support from the National Science Foundation under CAREER Award No.~2238269 (FAIN: 2238269) awarded to Dr.\ Shuo Wang.

%\clearpage
\appendix

\section{\texorpdfstring{Detailed derivation of the adjoint system}{Detailed Derivation of the Adjoint System}}
\label{app:adjoint}

The necessary conditions for optimality require the existence of a costate vector
$\bm{\lambda}(t) = [\lambda_1(t), \lambda_2(t), \lambda_3(t)]^T$
satisfying the adjoint differential equations
\[
\dot{\bm{\lambda}}(t) = -\nabla_{\bm{w}} H(\bm{w}(t),u(t),v(t),\bm{\lambda}(t)),
\qquad \bm{w}(t)=[x(t),y(t),z(t)]^T.
\]

The resulting expressions match the summary reported in Section~\ref{sec:necessary_optimality}.

The Hamiltonian is
\begin{align*}
    H &= \ell_1 x + \ell_2 y + \ell_3 z + A u + B v + C \\
      &\quad+ \lambda_1 \left[ \xi x \left(1 - \frac{x}{x_\infty}\right) - \theta x y - \frac{\alpha x u}{1+u} \right] \\
      &\quad+ \lambda_2 \left[ a(1 - bx)yz + \gamma - \delta y - \frac{\kappa y u}{1+u} + \frac{\nu y v}{1+v} \right] \\
      &\quad+ \lambda_3 \left[ \sigma x + \frac{\psi x u}{1+u} - \mu z \right].
\end{align*}

\subsection{\texorpdfstring{Adjoint equation for tumor volume ($\lambda_1$)}{Adjoint equation for tumor volume (lambda1)}}
Differentiating $H$ with respect to $x$ gives
\[
\frac{\partial H}{\partial x}
= \ell_1
+ \lambda_1\!\left(\xi\left(1-\frac{2x}{x_\infty}\right) - \theta y - \frac{\alpha u}{1+u}\right)
- \lambda_2 a b y z
+ \lambda_3\!\left(\sigma + \frac{\psi u}{1+u}\right).
\]

Hence,
\[
\dot{\lambda}_1
= -\ell_1
- \lambda_1\!\left(\xi\left(1-\frac{2x}{x_\infty}\right) - \theta y - \frac{\alpha u}{1+u}\right)
+ \lambda_2 a b y z
- \lambda_3\!\left(\sigma + \frac{\psi u}{1+u}\right).
\]

\subsection{\texorpdfstring{Adjoint equation for immune density ($\lambda_2$)}{Adjoint equation for immune density (lambda2)}}
Differentiating $H$ with respect to $y$ yields
\[
\frac{\partial H}{\partial y}
= \ell_2
- \lambda_1 \theta x
+ \lambda_2\!\left(a(1-bx)z - \delta - \frac{\kappa u}{1+u} + \frac{\nu v}{1+v}\right).
\]

Thus,
\[
\dot{\lambda}_2
= -\ell_2
+ \lambda_1 \theta x
- \lambda_2\!\left(a(1-bx)z - \delta - \frac{\kappa u}{1+u} + \frac{\nu v}{1+v}\right).
\]

\subsection{\texorpdfstring{Adjoint equation for antigen concentration ($\lambda_3$)}{Adjoint equation for antigen concentration (lambda3)}}
Differentiating $H$ with respect to $z$ gives
\[
\frac{\partial H}{\partial z}
= \ell_3
+ \lambda_2 a(1-bx)y
- \lambda_3 \mu.
\]

Hence,
\[
\dot{\lambda}_3
= -\ell_3
- \lambda_2 a(1-bx)y
+ \lambda_3 \mu.
\]

\section{\texorpdfstring{Analytic resolution of interior control arcs}{Analytic resolution of interior control arcs}}
\label{app:arcs}

The pointwise minimizers are obtained from the stationarity conditions
$\partial H/\partial u = 0$ and $\partial H/\partial v = 0$ on intervals where
the optimal controls lie strictly inside their bounds, i.e.,
$0<u<u_{\max}$ and $0<v<v_{\max}$. Outside these interior regimes, the minimizers
are attained at the admissible bounds as characterized in
Proposition~\ref{prop:pointwise_minimizers}.

\subsection{\texorpdfstring{Chemotherapy control $u(t)$}{Chemotherapy control u(t)}}
Define the switching potential
\[
\Phi_u(t) = -\lambda_1(t)\,\alpha x(t) \;-\; \lambda_2(t)\,\kappa y(t) \;+\; \lambda_3(t)\,\psi x(t).
\]

Since $\frac{d}{du}\!\left(\frac{u}{1+u}\right)=\frac{1}{(1+u)^2}$, the stationarity condition yields
\[
\frac{\partial H}{\partial u}
= A + \frac{\Phi_u(t)}{(1+u)^2} = 0
\quad\Longrightarrow\quad
(1+u)^2 = -\frac{\Phi_u(t)}{A}.
\]

Assuming $A>0$, a real interior stationary point requires $\Phi_u(t)<0$, and
\[
\hat{u}(t) = -1 + \sqrt{\frac{-\Phi_u(t)}{A}}.
\]

Projecting onto the admissible set $u\in[0,u_{\max}]$ gives
\begin{equation}
u^*(t) = \min\!\left\{u_{\max},\; \max\!\left\{0,\; -1 + \sqrt{\frac{-\Phi_u(t)}{A}}\right\}\right\}.
\end{equation}

\subsection{\texorpdfstring{Immunotherapy control $v(t)$}{Immunotherapy control v(t)}}
Define the switching potential
\[
\Phi_v(t) = \lambda_2(t)\,\nu y(t),
\]
so that stationarity gives
\[
\frac{\partial H}{\partial v}
= B + \frac{\Phi_v(t)}{(1+v)^2} = 0
\quad\Longrightarrow\quad
(1+v)^2 = -\frac{\Phi_v(t)}{B}.
\]

Assuming $B>0$, a real interior stationary point requires $\Phi_v(t)<0$, and
\[
\hat{v}(t) = -1 + \sqrt{\frac{-\Phi_v(t)}{B}}.
\]

Projecting onto the admissible set $v\in[0,v_{\max}]$ yields
\begin{equation}
v^*(t) = \min\!\left\{v_{\max},\; \max\!\left\{0,\; -1 + \sqrt{\frac{-\Phi_v(t)}{B}}\right\}\right\}.
\end{equation}

\noindent \textit{Numerical implementation note.}
In finite-precision arithmetic, $\Phi_u$ and $\Phi_v$ may approach zero from below
(e.g., $\Phi_u \approx -10^{-16}$), which can cause spurious complex values inside the square root.
We therefore evaluate
\[
\sqrt{\max\!\left\{0,\; -\frac{\Phi_u(t)}{A}\right\}}, \qquad
\sqrt{\max\!\left\{0,\; -\frac{\Phi_v(t)}{B}\right\}},
\]
before applying the projection onto $[0,u_{\max}]$ and $[0,v_{\max}]$.

\medskip
Received for publication January 16, 2026; early access May 6, 2026
\medskip
\end{document}